\numberwithin{equation}{section}
\newtheorem{thm}{Theorem}[section]
\newtheorem{lemma}[thm]{Lemma}
\newtheorem{remark}[thm]{Remark}
\newtheorem{corollary}[thm]{Corollary}
\newtheorem{example}{Example}{\rm}
{\rm}
\def\beq{\begin{equation} }
\def\eeq{\end{equation} }
\def\e{\hbox{\rm e}}
\def\N{\mathbb{N}}
\def\M{\mathbf{M}}
\def\R{\mathbb{R}}
\def\Q{\mathbf{Q}}
\def\DD{\mathbf{D}}
\def\M{\mathbf{M}}
\def\G{\mathbf{G}}
\def\v{\mathbf{v}}
\def\f{\mathbf{f}}
\def\u{\mathbf{u}}
\def\f{\mathbf{f}}
\def\x{\mathbf{x}}
\def\y{\mathbf{y}}
\def\e{\mathbf{e}}
\def\R{\mathbb{R}}
\def\X{\mathbf{X}}
\def\g{\mathbf{g}}
\begin{document}
\title{Reconstruction of algebraic-exponential data from  moments}
\author{Jean-Bernard Lasserre\thanks{LAAS-CNRS and Institute of Mathematics, University of Toulouse, 7 Avenue du Colonel Roche,
BP 54200, 31031 Toulouse cedex 4, France; email: {\tt lasserre@laas.fr}}
\and Mihai Putinar\thanks{
Division of Mathematical Sciences, Nanyang Technological University, Singapore 637371, and
Department of Mathematics, University of California, Santa Barbara, CA 93106;
email: {\tt mputinar@ntu.edu.sg}, {\tt mputinar@math.ucsb.edu}}}

\maketitle
\begin{abstract}
Let $\G$ be a bounded open subset of Euclidean space
with real algebraic boundary $\Gamma$.
Under the assumption that the degree $d$ of $\Gamma$ is given, and the power moments of
the Lebesgue measure on $\G$ are known up to order $3d$, we describe an algorithmic procedure
for obtaining a polynomial vanishing on $\Gamma$. The particular case of semi-algebraic sets defined by a
single polynomial inequality raises an intriguing question related to the finite determinateness of the full moment sequence. The more general case of a measure with density equal to the exponential of a polynomial is treated in parallel. Our approach 
relies on Stokes theorem and simple Hankel-type matrix identities.\\
{\bf Keywords:} moment problem; semi-algebraic set; finite determinateness
\end{abstract}


.

\section{Introduction}

The present paper is concerned with the exact recovery of a semi-algebraic set $\G$ in Euclidean space from power moments of the Lebesgue measure with an exponential of a polynomial as a density.  Regarded as a rather specialized inverse problem the reconstruction algorithm proposed below is a part of
current studies in geometric tomography, computerized tomography, and in particular shape recognition and shape recovery. We derive with minimal technical means a series of simple observations about the exact reconstruction from moments of various algebraic/exponential
data. The matrix analysis framework we propose below is an extension of sums of squares and semi-definite programing techniques recently developed in polynomial optimization
\cite{lasserre}. 

Reconstruction algorithms of particular shapes abound: polyhedra \cite{milanfar,gravin}, planar quadrature domains \cite{Putinar,Ebenfelt}, convex bodies \cite{Gardner}, sublevel sets of homogeneous polynomials \cite{lasserre-dcg}. 
It is not our aim to comment or compare them, nor to dwell into the long and glorious past of the inversion of algebraic integral transforms \cite{AGV,Pham,Palamodov}. Central to all these studies is the structure of moments
of algebraic data, again a rich and very ramified topics with old roots \cite{Krein,KS}
and current contributions \cite{morosov1,morosov2}.

The contents is the following. We first consider the case of bounded open set $\G\subset\R^n$ with algebraic boundary $\partial\G$.
If the degree $d$ of $\partial\G$ and moments (up to order $3d$)
of the Lebesgue measure on $\G$ are known then the vector of coefficients $\g$
of a polynomial $g$ of degree $d$ that vanishes on $\partial\G$ is uniquely determined (up to a constant)
as the generator of the one-dimensional kernel of
a certain moment-like matrix
whose entries are obtained from moments of the Lebesgue measure on $\G$.
That is, only finitely many such moments (up to order $3d$) are needed and computing $\g$ reduces
to a simple linear algebra procedure. Moreover, in case when $\G$ is convex only moments up to order $2d$ suffice.

An important consequence concerns the case of a sublevel set $\G = \{\,\x: g(\x)\leq 0\,\}$ of a polynomial $\g\in\R[\x]_d$.
Indeed the moments of the Lebesgue measure on $\G$ can all be deduced from those up to order
$3d$ (and $2d$ if $\G$ is convex)! That is, exactly as in the classical situation of a degenerated moment problem on the line, we single out a finite determinateness
property of moment sequences attached to algebraic/exponential data. The analogy to the well understood moment rigidity of the Gaussian distribution is striking, although the constructive aspects of this finite determinateness remain
too theoretical in general. To be more precise, we show that for a given polynomial $p$ the moment sequence
\begin{equation}
\label{bounded}
 \int_{g(\x)<1} \x^\alpha \exp(p(\x)) \,d\x, \ \ \alpha \in \N^n,
 \end{equation} is determined by its finite initial segment $|\alpha|<N$,
with $N$ depending only on the degrees of $g$ and $p$. And similarly, under the necessary integrability assumption, the full sequence of moments
$$ \int_{\R^n} \x^\alpha \exp(q(\x))\, d\x, \ \ \alpha \in \N^n,$$ 
is determined by its initial finite section $|\alpha|<N$, where $N$ depends only on the degree of the unknown polynomial
$q$. (Notice that the family of measures $\{\mu_g\}$ indexed by $g\in\R[\x]_d$ and described in (\ref{bounded}), does not form an exponential family.)

Finally, when the boundary $\partial\G$ is not algebraic, we describe a simple heuristic procedure
to compute a polynomial $g$ whose level set $\{\x: g(\x)=0\}$ approximates $\partial\G$, and
the higher the degree of $g$, the better is the approximation. An illustrative simple case of a real analytic boundary shows how this
procedure can be very efficient. The error estimates for this approximation procedure will be discussed in a separate article. In particular, a comparison with the complex orthogonal polynomial reconstruction method is in order \cite{GPSS},
as well as a parallel to the ubiquitous Prony method \cite{PP}.

Guided by simplicity, clarity of exposition and accessibility to non-experts, our
article remains at an elementary level, with precise references to the technical aspects of real algebraic geometry or geometric integration theory needed in the proofs. \\

{\it Acknowledgements.} The first author was partially supported by a grant from the PGMO
program of the {\it Fondation Math\'ematique Jacques Hadamard} (FMJH) and the second author was partially supported by a Nanyang Technological University grant. Both authors are grateful to the Institute of Mathematical Sciences, Singapore for offering an inspiring climate of research
during the special November 2013-January 2014 program devoted to Moment Problems.

\section{Main result}

Let $\R[\x]$ be the ring of polynomials in the variables $\x=(x_1,\ldots,x_n)$ and let
$\R[\x]_d$ be the vector space of polynomials of degree at most $d$
(whose dimension is $s(d):={n+d\choose n}$).
For every $d\in\N$, let  $\N^n_d:=\{\alpha\in\N^n:\vert\alpha\vert \,(=\sum_i\alpha_i)=d\}$, 
and
let $\v_d(\x)=(\x^\alpha)$, $\alpha\in\N^n$, be the vector of monomials of the canonical basis 
$(\x^\alpha)$ of $\R[\x]_{d}$. 
A polynomial $f\in\R[\x]_d$ is written
\[\x\mapsto f(\x)\,=\,\sum_{\alpha\in\N^n}f_\alpha\,\x^\alpha,\]
for some vector of coefficients $\f=(f_\alpha)\in\R^{s(d)}$.

A real-valued polynomial $g:\R^n\to\R$ is homogeneous of degree $d$ ($d\in\N$)
if $g(\lambda\x)=\lambda^dg(\x)$ for all $\lambda$ and all $\x\in\R$. 

For an arbitrary polynomial $g\in\R[\x]_d$, write
\[g(\x)\,=\,\sum_{k=0}^dg_k(\x),\qquad\forall\x\in\R^n,\]
where for each $k\leq d$, $g_k$ is homogeneous of degree $k$.

\subsection{General framework}

\begin{lemma} Let $\G$ be a bounded open subset of $\R^n$ and let $g$ be a polynomial
satisfying $g(x) = 1$ for all $x \in \partial \G$.
Then for every $\alpha\in\N^n$:
\begin{equation}
\label{lem1-1}
\int_{\G}\x^\alpha\,(1-g(\x))\,d\x\,=\,\sum_{k=1}^d
\frac{k}{n+\vert\alpha\vert}\int_{\G}\,\x^\alpha\,g_k(\x)d\x.
\end{equation}
\end{lemma}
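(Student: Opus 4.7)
The proof will follow directly from the divergence theorem applied to a carefully chosen vector field, combined with Euler's identity for homogeneous polynomials.

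The plan is to apply the divergence theorem to the vector field
$$F(\x) \,=\, \x^\alpha\,(1-g(\x))\,\x,$$
i.e., the radial vector field rescaled by the scalar function $\x^\alpha(1-g(\x))$. The boundary integral $\int_{\partial\G} F\cdot\nu\,dS$ vanishes automatically because the scalar factor $1-g$ is zero on $\partial\G$ by hypothesis. Thus the identity will emerge purely from computing $\nabla\cdot F$ in the interior.

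For the divergence computation, I use the product rule to write
$$\nabla\cdot F \,=\, (1-g)\,(\nabla \x^\alpha)\cdot\x \,-\, \x^\alpha(\nabla g)\cdot\x \,+\, n\,\x^\alpha(1-g),$$
and then invoke Euler's identity on each homogeneous piece: $(\nabla \x^\alpha)\cdot\x = |\alpha|\,\x^\alpha$ because $\x^\alpha$ is homogeneous of degree $|\alpha|$, and $(\nabla g_k)\cdot\x = k\,g_k$ for each homogeneous component, so $(\nabla g)\cdot\x = \sum_{k=1}^d k\,g_k$ (the constant term $g_0$ contributes nothing). Substituting gives
$$\nabla\cdot F \,=\, (n+|\alpha|)\,\x^\alpha(1-g(\x)) \,-\, \x^\alpha\sum_{k=1}^d k\,g_k(\x).$$
Integrating over $\G$, using $\int_\G \nabla\cdot F\,d\x = 0$ from the vanishing boundary term, and dividing by $n+|\alpha|$ (which is positive) yields (\ref{lem1-1}).

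The only real subtlety is ensuring the divergence theorem applies; since $\G$ is only assumed bounded and open with a polynomial vanishing on $\partial\G$, I would invoke a Stokes-type statement for sets whose boundary is a real algebraic (hence semi-algebraic) hypersurface, where the standard machinery of geometric integration theory gives a $(n-1)$-rectifiable boundary with a well-defined measure-theoretic outward normal almost everywhere. This is routine in the context the authors are working in, and is presumably what they allude to when citing ``geometric integration theory'' in the introduction; I would simply state this regularity as the underlying hypothesis and cite the appropriate form of Stokes' theorem.
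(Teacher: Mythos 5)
Your proposal is correct and is essentially the paper's own argument: your field $F(\x)=\x^\alpha(1-g(\x))\,\x$ is exactly the pair $X=\x$, $f=\x^\alpha(1-g)$ the authors feed into Stokes' formula, with the same use of Euler's identity on the homogeneous pieces, the same vanishing boundary term, and the same appeal to a generalized (Whitney/Federer-type) Stokes theorem to handle a possibly non-smooth boundary. No meaningful difference in route or in the regularity caveat.
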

\begin{proof}

If the boundary of $\G$ were smooth an application of Stokes theorem would imply the identity in the statement.
Indeed, denote by $\vec{n}_\x$ be the outward pointing normal to $\G$ at the point $\x\in\partial\G$.
With the vector field $X=\x$ and function $f=\x^\alpha (1-g)$, Stokes' formula yields:
\begin{equation}
\label{eq-stokes}
\int_\G{\rm Div}(X) f\,d\x+\int_\G X\cdot f\,d\x \,=\,\int_{\partial\G}\langle X,\vec{n}_\x\rangle\,f d\sigma,\end{equation}
where $\sigma$ is the surface area measure on $\partial\G$.
Therefore (\ref{lem1-1}) follows because $f$ vanishes on $\partial\G$, ${\rm Div}(X)=n$, and 
\[X\cdot f=\vert\alpha\vert f-\x^\alpha\langle\x,\nabla g(\x)\rangle\,=\,
\vert\alpha\vert f-\x^\alpha\sum_{k=1}^d kg_k(\x).\]

In the presence of singularities of $\partial \G$, Whitney's generalization of Stokes theorem \cite{Whitney} Theorem 14A
applies, leading to the same conclusion.
\end{proof}

Next, given a bounded open set $\G \subset \R^n$, let
$\y=(y_\alpha)$, $\alpha\in\N^n$, be the vector of moments of the restriction of the Lebesgue measure to $\G$:
\[y_\alpha\,=\,\int_\G\x^\alpha\,d\x,\qquad\forall\alpha\in\N^n,\]
and let $\y^k=(y_\alpha)$, $\alpha\in\N^n_k$, be the finite vector in $\R^{s(d)}$ of moments up to order $k$.
We define a renormalised moment matrix $\M^d_k(\y)$, $k,d\in\N$, as follows:\\

- $s(d)$ ($={n+d\choose n}$) columns indexed by $\beta\in\N^n_d$,

- rows indexed by $\alpha\in\N^n_k$,
and  with entries:
\begin{equation}
\label{overlineM-1}
\M^d_k(\y)(\alpha,\beta)
\,:=\,\frac{n+\vert\alpha\vert+\vert\beta\vert}{n+\vert\alpha\vert}\,y_{\alpha+\beta},\qquad\alpha\in\N^n_k,\,\beta\in\N^n_d.\end{equation}

Our aim is to reverse the statement of the above Lemma and recover from finitely many moments a defining function of an open set whose boundary is contained in the real zero set of a polynomial. Questions of uniqueness, choice of the coordinate system, irreducibility naturally arise, and we will address them in subsequent corollaries of the following theorem. First we consider the generic case of a distinguished point $\x=0$ not belonging to the Zariski closure of the boundary.

Assume that $\G$ is an open subset of $\R^n$, so that $\G = {\rm int} \overline{\G}$ (that is $\G$ does not contain "slits") and
the boundary $\partial \G$ is real algebraic. The dimension of $\partial \G$ is then $n-1$, so that the ideal associated to it is principal
(see for instance Theorem 4.5.1 in \cite{BCR}). In particular, there exists a polynomial $g$, vanishing of the first order on 
every smooth component of $\partial \G$, with the property that every other polynomial vanishing on $\partial \G$ is a multiple of $g$, in standard algebraic notation
$$ I(\partial \G) = (g).$$
We define the {\it degree} of $\partial \G$ as the degree of the generator $g$ of the ideal $I(\partial \G)$.

 Note however that the polynomial $g$ may
vanish at internal points of $\G$, and it may even change sign there. 
A simple example supporting this assertion can be obtained from the sector of a disk of large inner angle:
$$ \G = \{ \x \in \R^2:\,  x_1^2 + x_2^2 <1 \} \setminus \{ \x \in \R^2:\, 0 \leq |x_2| \leq x_1\}.$$
The defining function of $\partial \G$ is $g(\x) = (1-x_1^2-x_2^2)(x_1-x_2)(x_1+x_2)$, which changes sign inside $\G$.

\begin{thm}
\label{thmain}
Let $\G \subset \R^n$ be a bounded open set with real algebraic boundary. Assume that $\G = {\rm int}\, \overline{\G},$ the boundary $\partial \G$ has degree $d$ and the point $x=0$ does not belong to the zero set of the
ideal $I(\partial \G)$.

Let $\M^d_{2d}(\y)(\alpha,\beta)$ be the kernel defined in (\ref{overlineM-1}) associated with the moments of $\G$. Then the linear system
\begin{equation}
\label{thmain-1}
\M^d_{2d}(\y)\,\left[\begin{array}{c}-1\\ \g\end{array}\right]\,=\,0.
\end{equation}
admits a unique solution $\g\in \R^{s(d)-1}$, and the polynomial $g$ with coefficients $(0,\g)$ satisfies
$$ (\x \in \partial \G) \Rightarrow  (g(\x) =1).$$
\end{thm}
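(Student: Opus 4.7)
The plan is to convert the linear system~(\ref{thmain-1}) into a family of Stokes-type identities on $\partial\G$, exhibit an explicit solution built from the generator of $I(\partial\G)$, and argue uniqueness by showing that the columns of $\M^d_{2d}(\y)$ indexed by $\beta\ne 0$ are linearly independent. Using the factorization $\tfrac{n+|\alpha|+|\beta|}{n+|\alpha|}=1+\tfrac{|\beta|}{n+|\alpha|}$, a direct expansion shows that for a polynomial $g\in\R[\x]_d$ with $g(0)=0$ and coefficient vector $\g$, the $\alpha$-th row of $\M^d_{2d}(\y)\left[\begin{array}{c}-1\\ \g\end{array}\right]$ equals
\[
-\int_\G \x^\alpha(1-g)\,d\x+\sum_{k=1}^d\frac{k}{n+|\alpha|}\int_\G \x^\alpha g_k(\x)\,d\x.
\]
Hence (\ref{thmain-1}) coincides with the family of identities (\ref{lem1-1}) for $|\alpha|\le 2d$; reading the Stokes step in the preceding lemma's proof backwards, this is equivalent to
\[
\int_{\partial\G}\langle\x,\vec{n}_\x\rangle\,\x^\alpha(1-g)\,d\sigma=0,\qquad |\alpha|\le 2d.
\]

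For existence, and simultaneously the required property $g\equiv 1$ on $\partial\G$, I would use the hypothesis $I(\partial\G)=(g_0)$ with $g_0(0)\ne 0$: the polynomial $g^\star:=1-g_0/g_0(0)$ belongs to $\R[\x]_d$, has $g^\star(0)=0$, and restricts to $1$ on $\partial\G$. The preceding lemma then gives~(\ref{lem1-1}) for every $\alpha$, so the coefficient vector $\g^\star$ of $g^\star$ solves~(\ref{thmain-1}).

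For uniqueness, let $g$ be any solution and set $R:=g-g^\star\in\R[\x]_d$, noting $R(0)=0$. Subtracting the two Stokes vanishings yields
\[
\int_{\partial\G}\langle\x,\vec{n}_\x\rangle\,q(\x)\,R(\x)\,d\sigma=0\qquad\forall\,q\in\R[\x]_{2d}.
\]
The strategy is to deduce $R\equiv 0$: once we know $R|_{\partial\G}\equiv 0$, the hypothesis $I(\partial\G)=(g_0)$ combined with $\deg R\le d=\deg g_0$ forces $R=c g_0$ for a scalar $c$, and then $R(0)=c g_0(0)=0$ together with $g_0(0)\ne 0$ gives $c=0$. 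The main obstacle is thus proving $R|_{\partial\G}\equiv 0$ from only the finite family of $s(2d)$ test monomials --- equivalently, establishing that the columns of $\M^d_{2d}(\y)$ indexed by $\beta\ne 0$ are linearly independent. The natural test choice $q=R\tilde q$ with $\tilde q\in\R[\x]_d$ produces $\int_{\partial\G}\langle\x,\vec{n}_\x\rangle R^2\tilde q\,d\sigma=0$ for all $\tilde q\in\R[\x]_d$, but a direct positivity argument is blocked because $\langle\x,\vec{n}_\x\rangle$ need not have constant sign on $\partial\G$ (the sector example preceding the theorem shows how $\partial\G$ may fail to be star-shaped from the origin). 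The proof must therefore invoke the ``simple Hankel-type matrix identities'' announced in the abstract to pin down the rank of $\M^d_{2d}(\y)$ at exactly $s(d)-1$, forcing the kernel to be one-dimensional and generated by the coefficient vector of $-1+g^\star$.
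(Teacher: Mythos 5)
Your setup is faithful to what the theorem needs and partly parallels the paper: the row-by-row expansion of (\ref{thmain-1}), the explicit solution obtained by normalizing the generator of $I(\partial \G)$ (possible since it does not vanish at $0$), and the reduction of both uniqueness and the conclusion $g=1$ on $\partial\G$ to showing that any solution agrees with $g^\star$ on $\partial\G$. But at precisely the step you yourself flag as ``the main obstacle'' the argument stops, and the exit you propose does not exist: there is no independent linear-algebra lemma (``Hankel-type identities'') that pins ${\rm rank}\,\M^d_{2d}(\y)$ at $s(d)-1$; in the paper the one-dimensionality of the kernel is a \emph{consequence} of the geometric argument, not an input to it. The missing idea is to stop treating the weight $\langle\x,\vec{n}_\x\rangle$ as a mere measurable function and to substitute $\vec{n}_\x=\nabla h(\x)/\Vert\nabla h(\x)\Vert$ on the smooth stratum $Z$ of a semi-algebraic stratification of $\partial\G$, where $h$ is the degree-$d$ generator of $I(\partial\G)$ (discarding the null set where $\nabla h=0$). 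The orthogonality relations then read
\[
\int_{Z}\langle\x,\nabla h(\x)\rangle\,(1-g(\x))\,\x^\alpha\,d\sigma'=0,\qquad \alpha\in\N^n_{2d},\qquad d\sigma':=\frac{d\sigma}{\Vert\nabla h(\x)\Vert},
\]
where now the weight $\langle\x,\nabla h(\x)\rangle$ is a \emph{polynomial of degree at most $d$} and $d\sigma'$ is a positive measure. Since the test monomials run up to degree $2d$, you may take as test polynomial $\langle\x,\nabla h(\x)\rangle(1-g(\x))$ itself, obtaining $\int_Z\left[\langle\x,\nabla h(\x)\rangle(1-g)\right]^2 d\sigma'=0$; the sign obstruction that blocked you disappears because the weight gets squared together with $1-g$ (in your notation, the same trick applied to a difference of solutions gives $\int_Z[\langle\x,\nabla h\rangle R]^2\,d\sigma'=0$). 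This is exactly why the theorem requires moments up to order $3d$, i.e.\ rows indexed by $\N^n_{2d}$.

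The second ingredient missing from your sketch is the auxiliary fact that $\langle\x,\nabla h(\x)\rangle$ cannot vanish on a subset of $Z$ of positive $\sigma$-measure. If it did, it would vanish identically on a connected component $Z_1$, and Hilbert's Nullstellensatz together with a degree count would give $\langle\x,\nabla h(\x)\rangle=\lambda\,h(\x)$ with $\lambda$ constant, an Euler identity forcing $h(0)=0$ and hence that every polynomial vanishing on $Z_1$ vanishes at $0$ --- contradicting the hypothesis that $0$ is not in the zero set of $I(\partial\G)$. (So that hypothesis does real analytic work here, beyond merely permitting the normalization $g^\star(0)=0$.) With these two points, $g=1$ $\sigma'$-a.e.\ on $Z$, hence everywhere on $\partial\G$ by continuity and $\G={\rm int}\,\overline{\G}$, and your own algebraic closing step ($R$ vanishes on $\partial\G$, so $R\in(h)$, degree count, $R(0)=0$, $h(0)\neq0$) correctly finishes uniqueness. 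Without the polynomial-normal substitution and this non-vanishing lemma, the proposal is incomplete at its central point.
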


\begin{proof} 
Again, if the boundary of $\G$ were smooth, we could simply remark that
 (\ref{thmain-1}) is just a rephrasing of (\ref{lem1-1}) (for all $\alpha\in\N^n_{2d}$) in terms of the vector $\g$ and the matrix $\M^d_{2d}(\y)$. 

We start by noticing that the algebraic
boundary $\partial \G$ admits a semi-algebraic triangulation (see \cite{BCR} Sections 9.2-3). Denote
$$ \partial \G = Z \cup Z',$$
where $Z$ is a finite union of smooth $(n-1)$-submanifolds of $\R^n$, leaving $\G$ on one side, and
$Z'$ is the union of the lower dimensional strata, so that $Z'$ has vanishing $(n-1)$-measure. In this case
a generalization of Stokes Theorem is valid, for smooth differential forms (see \cite{Whitney} Theorem 14A,
of \cite{federer}). 

According to the above theorem, the system (\ref{thmain-1}) is compatible;
indeed by assumption there is a polynomial, say $g^*\in\R[\x]_d$,
vanishing on $\partial\G$ (hence on $Z$) and with coefficient vector 
of the form $(-1,\g^*)\in\R^{s(d)}$, since $g^*(0)\neq0$.
Therefore  $(-1,\g^*)^T$ is a solution of (\ref{thmain-1}).

Next, let $(-1,\g)$ denote an arbitrary solution of (\ref{thmain-1}) and
let $g\in\R[\x]_d$ be the polynomial having $(0,\g)$ as vector of coefficients
(hence vanishing at $\x=0$). Then we infer by Stokes
Theorem:
\begin{equation}
\label{aux1}
\int_Z\langle X,\vec{n}_\x\rangle\,(1-g)\,\x^\alpha d\sigma\,=\,0,\qquad \forall\alpha\in\N^n_{2d}.
\end{equation}
Assume that the function $\langle X, \vec{n}_\x \rangle$ vanishes on a set $S \subset Z$ of non-null $\sigma$ measure.
Then, there exists a polynomial $h(\x)$ which vanishes on a connected component $Z_1$ of $Z$, so that
$\nabla h$ is not identically zero on $Z_1$. The polynomial function $\x\mapsto\langle \nabla h(\x), \x\rangle $ vanishes on $S$, as
$\nabla h(\x)$ is colinear with the normal vector $\vec{n}_\x$ at $\x$ on the hypersurface $Z_1$. Since $S$ has non-null
$\sigma$-measure, we infer that $\langle \nabla h(\x), \x\rangle $ is identically equal to zero on $Z_1$. In virtue of 
Hilbert's Nullestellensatz (applied to the complexified ring of polynomials), we have
$$ \langle \nabla h(\x), \x\rangle  = \theta(\x) h(\x)$$
where $\theta$ is a complex polynomial. Indeed, fix a smooth point $a$ of $Z_1$ and remark
that the ideal generated by $h$ in the local ring ${\mathcal O}_{a}$ is prime.
By counting degrees, we find that
$$  \langle \nabla h(\x), \x\rangle  = \lambda \,h(\x)$$
where $\lambda$ is a real constant. Consequently  $h(0)=0$, a contradiction.

As a matter of fact, the above argument implies that any polynomial $q$ vanishing on $Z_1$
is a multiple of $h$, hence $q(0) = 0$.
This contradicts the hypothesis that $Z_1$ and the point $0$ can be separated by a polynomial
function.

From now on we consider $h$ to be a polynomial of degree equal to $d$, which vanishes of the first order on $Z$ 
(that is has non identically zero gradient on $Z$) and hence generates the ideal associated to $\partial \G$.
In addition, by possibly enlarging the null set $Z'$, the gradient of $h$ can be assume to be different than zero along $Z$.

Writing $\vec{n_\x}$ as $\nabla h(\x)/\Vert\nabla h(\x)\Vert$, (\ref{aux1}) now reads
\begin{eqnarray}
\nonumber
0&=& \int_Z\langle X,\vec{n}_\x\rangle\,(1-g)\,\x^\alpha d\sigma\,=\,0,\quad \forall\alpha\in\N^n_{2d}\\
\label{aux2}
&=& \int_{Z}\langle X,\nabla h(\x)\rangle\,(1-g)\,\x^\alpha \underbrace{\frac{1}{\Vert\nabla h(\x)\Vert}d\sigma}_{d\sigma'},
\quad\forall \,\alpha\in\N^n_{2d}.
\end{eqnarray}
With $X(\x)=\x$ notice that $\langle \X(\x),\nabla h(\x)\rangle$ is a polynomial of degree at most $d$, and as $\alpha$ in (\ref{aux2}) runs all over $\N^n_{2d}$ we obtain:
$$ \displaystyle\int_{Z}\left[\langle \x,\nabla h(\x)\rangle\,(1-g(\x))\right]^2 d\sigma'= 0,$$
which in turns implies
$$ \left[\langle \x,\nabla h(\x)\rangle\,(1-g(\x))\right]^2 = 0, \quad\mbox{$\sigma'$-a.e. in  $Z$}.$$
As $\langle \x,\nabla h(\x)\rangle\neq0$ for all $\x\in Z$, we find that
$g = 1$ on $Z$ and by continuity on $\partial \G$, in view of the assumption $\G = {\rm int} \,\overline{\G}$.

To complete the poof we now address the uniqueness issue:
Assume that (\ref{thmain-1}) has two distinct solutions $\g_1,\g_2\in\R^{s(d)-1}$. Then
\[\M^d_{2d}(\y)\,\left[\begin{array}{c}0\\ \g_1-\g_2\end{array}\right]\,=\,0.\]
Let $g\in\R[\x]_d$ be the polynomial with coefficient vector $\g_1-\g_2$
so that $g(0)=0$. In view of the assumption $0 \notin \{\x:h(\x)=0\}$ we find $g_1=g_2$.
\end{proof}

Observe that the matrix $\M^d_{2d}(\y)$ contains all moments up to order $3d$ and so
Theorem \ref{thmain} states that it suffices to consider moments up to order $3d$ to recover 
exactly a polynomial $g\in\R[\x]_d$ that is constant on the boundary of $\partial\G$. But of course 
one may sometimes recover $g$ with less moments as exemplified in Example \ref{example-1}
where we only need moments up to order $2d$ (using $\M^d_d(\y)$ instead of $\M^d_{2d}(\y)$).

\begin{corollary}
\label{coro-1}
Let $k\leq 2d$. Under the assumptions of Theorem \ref{thmain}, if 
\begin{equation}
\label{aux11}
\M^d_{k}(\y)\,\left[\begin{array}{c}-1\\ \g\end{array}\right]\,=\,0,\end{equation}
has a unique solution, then $\g$ also solves (\ref{thmain-1}).
\end{corollary}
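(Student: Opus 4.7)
The plan is to exploit the obvious row-inclusion between $\M^d_k(\y)$ and $\M^d_{2d}(\y)$ and then invoke the uniqueness hypothesis to force equality of solutions.

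First, I would observe that since $k \leq 2d$ we have the index inclusion $\N^n_k \subseteq \N^n_{2d}$ (reading $\N^n_k$ as the set of multi-indices of order at most $k$, consistently with the column-count $s(d) = \binom{n+d}{n}$ used in the definition of $\M^d_k(\y)$). Consequently, the rows of $\M^d_k(\y)$ are exactly a subset of the rows of $\M^d_{2d}(\y)$; in matrix language, $\M^d_k(\y)$ is a submatrix of $\M^d_{2d}(\y)$ extracted by keeping only the rows indexed by $\alpha\in\N^n_k$. It follows that any vector annihilated by $\M^d_{2d}(\y)$ is also annihilated by $\M^d_k(\y)$.

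Next I would apply Theorem \ref{thmain}: under its assumptions, the linear system (\ref{thmain-1}) admits a (unique) solution $\g^*\in\R^{s(d)-1}$. By the remark above, the vector $(-1,\g^*)^T$ also satisfies
\[
\M^d_k(\y)\,\left[\begin{array}{c}-1\\ \g^*\end{array}\right]\,=\,0,
\]
so $\g^*$ is a solution of (\ref{aux11}). But by assumption, (\ref{aux11}) has a unique solution, which we have called $\g$. Therefore $\g=\g^*$, and in particular $\g$ solves (\ref{thmain-1}).

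There is no real obstacle here: the whole argument is the trivial remark that if a smaller system, obtained by dropping rows, has a unique solution, then that solution must coincide with any solution of the larger (overdetermined) system, whose existence is guaranteed by Theorem \ref{thmain}. The only point worth flagging is the mild notational care needed in matching the index set $\N^n_k$ with the row count actually used in the matrix $\M^d_k(\y)$, but this is purely bookkeeping.
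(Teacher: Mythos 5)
Your argument is correct and is essentially the paper's own proof: the unique solution of (\ref{thmain-1}) guaranteed by Theorem \ref{thmain} also annihilates the row-subsystem (\ref{aux11}), so by the uniqueness hypothesis for (\ref{aux11}) it must coincide with $\g$. Your extra remark about reading $\N^n_k$ as multi-indices of order at most $k$ (matching the column count $s(d)$) is the correct interpretation of the paper's notation and does not change the argument.
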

\begin{proof}
Let $\g$ be the solution of (\ref{thmain-1}), so that $\g$ also solves the system (\ref{aux11}). 
Since (\ref{aux11}) has a unique solution by assumption, the above theorem completes the proof.
\end{proof}
Corollary \ref{coro-1} states that we only need consider 
moments up to order $k+d$ whenever (\ref{aux11}) has a unique solution.
\subsection{Convex supports}
\begin{corollary}
\label{convexity}
Let $\G \subset \R^n$ be a convex bounded open set with real algebraic boundary and $0 \in \G$. Assume  that $\G = {\rm int}\, \overline{\G}$ and that a polynomial of degree at most $d$ 
vanishes on $\partial \G$ (and not at $0$).
Then the system
\[\M^d_d(\y)\,\left[\begin{array}{c}-1\\ \g\end{array}\right]\,=\,0,\]
as a solution $(1,\g)\in\R^{s(d)}$ and the associated polynomial $1-g$ vanishes on the boundary
$\partial\G$. 
\end{corollary}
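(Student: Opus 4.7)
My plan is to show directly that every solution $(-1,\g)$ of the square system $\M^d_d(\y)[-1;\g]=0$ yields a polynomial $g$ (with coefficient vector $(0,\g)$, hence $g(0)=0$) satisfying $g\equiv 1$ on $\partial\G$; equivalently, $1-g$ vanishes on $\partial\G$. Existence of a solution is immediate from the hypothesis: take a polynomial $q$ of degree $\le d$ vanishing on $\partial\G$ with $q(0)\neq 0$ and rescale so that $g^{*}:=1-q/q(0)$ has $g^{*}(0)=0$ and $g^{*}\equiv 1$ on $\partial\G$; identity (\ref{lem1-1}) applied to $g^{*}$ then gives $\M^d_d(\y)[-1;\g^{*}]=0$.

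For the correctness of an arbitrary solution I would repeat the Stokes manipulation used in the proof of Theorem~\ref{thmain}, but restricted to test monomials of degree at most $d$. Exactly as there, the identity $\M^d_d(\y)[-1;\g]=0$ is equivalent, via Stokes' theorem applied to the vector field $X=\x$ and the test function $f=\x^\alpha(g-1)$, to the boundary integral conditions
$$\int_{\partial \G}\langle \x,\vec{n}_\x\rangle\,(g(\x)-1)\,\x^\alpha\,d\sigma = 0,\qquad\forall\alpha\in\N^n_d.$$
Since $g-1$ is itself a polynomial of degree $\le d$, hence an $\R$-linear combination of the test monomials $\x^\alpha$ with $|\alpha|\le d$, taking $g-1$ as the test polynomial yields
$$\int_{\partial \G}\langle \x,\vec{n}_\x\rangle\,(g(\x)-1)^2\,d\sigma = 0.$$

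The decisive new ingredient, and the only place where convexity is used, is the strict positivity $\langle \x,\vec{n}_\x\rangle>0$ at every smooth boundary point. This is a standard consequence of the supporting hyperplane theorem: the supporting hyperplane at a smooth $\x\in\partial\G$ places $0\in\G$ strictly on the interior side, with distance bounded below by the radius of any open ball about the origin contained in $\G$. Combined with the non-negativity of $(g-1)^2$, the displayed identity forces $g\equiv 1$ on the smooth part of $\partial\G$, and hence on all of $\partial\G$ by continuity (using the semi-algebraic triangulation invoked in Theorem~\ref{thmain} to dispose of the singular locus). The main technical point I foresee is isolating the convex-geometric inequality $\langle \x,\vec{n}_\x\rangle>0$; once that is in hand, the remaining steps are a direct transcription of the arguments already developed for Theorem~\ref{thmain}, with the gain of a factor of two in the required moment order arising from the ability to square the test polynomial without exceeding the row index range of the matrix.
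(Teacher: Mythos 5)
Your proposal is correct and follows essentially the same route as the paper: existence from the given polynomial rescaled to equal $1$ on $\partial\G$, then Stokes' theorem, then forming the linear combination with the coefficients of $1-g$ to obtain $\int_{Z}\langle \x,\vec{n}_\x\rangle(1-g)^2\,d\sigma=0$, and finally convexity to control the sign of $\langle \x,\vec{n}_\x\rangle$. Your one refinement, deducing strict positivity $\langle \x,\vec{n}_\x\rangle>0$ at smooth boundary points from $0\in\G$ via supporting hyperplanes, is valid and in fact slightly cleaner than the paper's appeal to $\langle \x,\vec{n}_\x\rangle\geq 0$ together with the assumption that it vanishes only on a $\sigma$-null subset of $Z$.
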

\begin{proof}
Proceeding as  in the proof of Theorem \ref{thmain}, we know that there exists a solution
$\g^*$ to $\M^d_d(\y)(-1,\g)^T=0$. So let
$\g$ be an arbitrary solution of $\M^d_d(\y)(-1,\g)^T=0$. 
Then again we infer by Stokes Theorem:
\begin{equation}
\label{aux30}
\int_Z\langle X,\vec{n}_\x\rangle\,(1-g)\,\x^\alpha d\sigma\,=\,0,\qquad \forall\alpha\in\N^n_d.
\end{equation}
But then multiplying each side of (\ref{aux30}) with $-g_\alpha$ if $\alpha\neq0$
and with $1$ if $\alpha=0$, and summing up, yields:
\[\int_Z\langle X,\vec{n}_\x\rangle\,(1-g)^2\,d\sigma\,=\,0.\]
Recall that $X(\x)=\x$ and as $\G$ is convex then $\langle \x,\vec{n}_\x\rangle\geq0$ for all $\x\in Z$. Again we may assume that $\langle X,\vec{n}_\x\rangle=0$ on $Z$ only on
a set of zero $\sigma$ measure. Therefore
$g(\x)=1$ for $\sigma$-almost all $\x\in Z$ and by continuity for all $\x\in Z$, and so for all $\x\in\partial\G$
as $\G={\rm int}\,\overline{\G}$.
\end{proof}

 It is important to remark that, in the convex case, the mere knowledge of the moments up to a
certain degree allows us to choose an interior point of the respective set. For instance, the gravity centre
$\x^* = (y_\alpha/y_0)_{|\alpha|=1}$ belongs to the interior of any non-empty, open and bounded convex set $\G$.

\subsection{The singular case} The unfortunate situation when $\x=0$ lies on the Zariski closure of $\partial \G$ can be 
resolved in many ways; for instance by changing the origin of coordinates, or by changing the vector field $X(\x) = \x$ appearing in the proof of the main result above. As for instance:
$$ X(\x) = (\lambda_1 x_1, \lambda_2 x_2, \ldots, \lambda_n x_n)$$
with independent parameters $\lambda_j$ belonging to $\{0,1\}$. In this case the explicit linear system is less symmetrical but
still elementary:
\begin{equation}
\label{aux4}
\int_\G f\,d\x+\int_\G x_j \frac{\partial f}{\partial x_j}\,d\x =0, \ \ 1 \leq j \leq n,\end{equation}
where $f = \x^\alpha(1-g)$ as in the proof of Theorem \ref{thmain}. 
To translate (\ref{aux4}) at the level of moments we need introduce the following
matrices $(\M^{jd}_{2d}(\y))$, $j=1,\ldots,n$, whose rows are indexed by $\alpha\in\N^n_{2d}$
and columns are indexed by $\beta\in\N^n_d$. Their respective entries read:
\begin{equation}
\label{aux5}
\M^{jd}_{2d}(\y)(\alpha,\beta)\,=\,\frac{1+\alpha_j+\beta_j}{1+\alpha_j}\,y_{\alpha+\beta-\e_j},\quad
\alpha\in\N^n_{2d},\,\beta\in\N^n_d
\end{equation}
for every $j=1,\ldots,n$ (where $\e_j=(\delta_{ij})\in\N$). Then (\ref{aux4}) reads
\begin{equation}
\label{aux6}
\M^{jd}_{2d}(\y)(\alpha,\beta)\,\left[\begin{array}{c}-1\\ \g\end{array}\right]\,=\,0,\quad j=1,\ldots,n.
\end{equation}
Hence if $g\in\R[\x]_d$ has a coefficient vector $(0,\g)\in\R^{s(d)}$ such that
$\g$ solves (\ref{aux6}) then
\[\int_Zx_j\,\langle \e_j,\vec{n}_\x\rangle\,(1-g)\,\x^\alpha d\sigma\,=\,0,\qquad \forall\alpha\in\N^n_{2d},\:j=1,\ldots,n.\]

\subsection{Exponentials of polynomials as densities}

So far we have considered only the moment sequence of the Lebesgue measure on $\G$. Without much change we can adapt the preceding calculations to the more general case where the reference measure is $d\mu:=\exp(p(\x))d\x$ for some polynomial $p\in\R[\x]_t$.

Indeed, with $\X(\x)=\x$ and $f=\x^\alpha(1-g)\exp(p)$ Stokes's formula (\ref{eq-stokes}) now reads
\begin{equation}
\label{extension-1}
(n+\vert\alpha\vert) \int_\G \x^\alpha (1-g)\underbrace{\exp(p)d\x}_{d\mu}+\int_\G\langle \x,\nabla p(\x)\rangle \x^\alpha(1-g)\underbrace{\exp(p)\,d\x}_{d\mu}
\end{equation}
\[-\int_\G\langle \x,\nabla g(\x)\rangle \x^\alpha\underbrace{\exp(p)\,d\x}_{d\mu}\,=\,\int_{\partial\G}\langle\x,\vec{n}_\x\rangle 
f\,d\sigma\,=\,0,\]
whenever $g$ vanishes on $\partial\G$.
So let $\y=(\y_\alpha)$, $\alpha\in\N^n$, with:
\[y_\alpha\,:=\,\int \x^\alpha\,d\mu\,=\,\int_\G\x^\alpha\,\exp(p(\x))\,d\x,\qquad\alpha\in\N^n.\]
Then for each $\alpha\in\N^n$, (\ref{extension-1}) 
translates again into a certain linear combination of moments $\y_\beta$ must be zero.
Therefore one may again build up a matrix $\widehat{\M}^d_{k}(\y)$ such that 
(\ref{extension-1}) for all $\alpha\in\N^n_k$ reads:
\begin{equation}
\label{extension-2}
\widehat{\M}^d_{k}(\y)\,\left[\begin{array}{c}-1\\ \g\end{array}\right]\,=\,0.
\end{equation}
The difference is that now this matrix $\widehat{\M^d}_{k}(\y)$ contains moments up to order 
$k+d+t$. 

Theorem \ref{thmain} remains valid if we replace $\M^d_{2d}(\y)$ with $\widehat{\M}^d_{2d}(\y)$.

\subsection{Non algebraic boundary}

Theorem \ref{thmain} suggests a simple strategy to approximately recover the boundary $\partial\G$ when
the latter is not algebraic. By still considering the same moment-like matrix $\M^d_d(\y)$ one may compute the polynomial
$g\in\R[\x]_d$ with coefficient vector $(-1,\g)\in\R^{s(d)}$ such that
$(-1,\g)$ is the right-eigenvector of the matrix $\M^d_d(\y)$ corresponding to the eigenvalue with smallest 
absolute value (if they are all real), or alternatively, the singular vector corresponding to the smallest singular value. We illustrate this strategy on the following simple example.

\begin{example}
Let $\G=\{\x\in\R^2: x_1\geq0;\:x_2\geq1;\:x_2\leq \exp(-x_1+1)\,\}$.
After the change of coordinate $u_1=x_1-1;u_2=x_2$, $\G=\{\u\in\R^2:\,u_1\geq-1;\,u_2\geq1;\,u_2\leq \exp(-u_1)\,\}$
and the origin is not on the boundary $\partial\G$. The shape of $\G$ is displayed in Figure \ref{figure-1}.
\begin{figure}[ht]
\begin{center}
\includegraphics[width=0.6\textwidth]{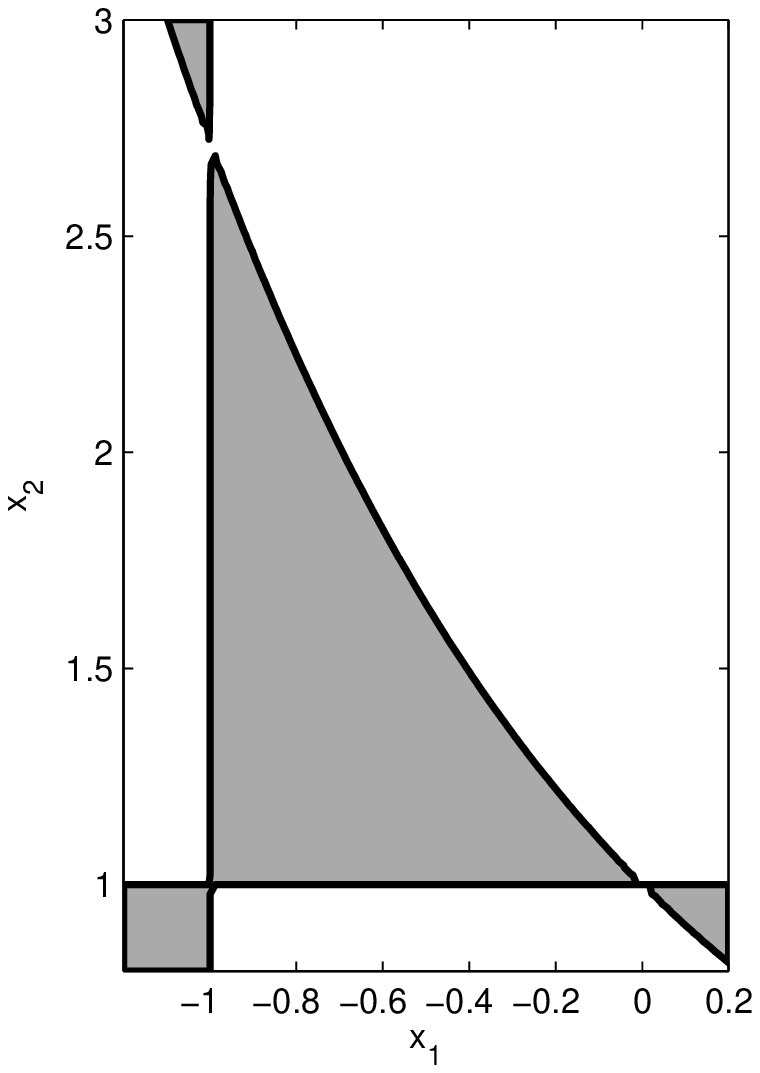}
\caption{Shape of $\G=\{\x: x_1\geq-1;\,x_2\geq1;\,x_2\leq\exp(-x_1)\}$ \label{figure-1}}
\end{center}
\end{figure}
With $d=4$ the all real eigenvalues of $\M_4^4(\y)$ are:
 \[( 2.554403541590561,\,   0.029721326859401,\,   0.004701287525356,\,  -0.001205165323011,\]
 \[  0.000501376438286,\,  0.000034728492891,\,  -0.000014265137533,\,  0.000004783118091,\]
\[  0.000000553859294, \,-0.000000246021037,\,   0.000000011479566,\,   0.000000004666064,\]
\[ -0.000000001994621,   0.000000000000000,\,   0.000000000003563)\]
which shows that one eigenvalue is almost equal to zero (up to Matlab eight digits numerical precision). 
By computing its corresponding right-eigenvector we obtain the following polynomial
\[\u\mapsto g(\u)=
2.554403541590561 +  0.029721326859401\,u_1   +0.004701287525356\,u_2 \]
\[ -0.001205165323011\,u_1^2+   0.000501376438286  \,u_1u_2 +0.000034728492891\,u_2^2\]
\[  -0.000014265137533 \,u_1^3  +0.000004783118091u_1^2u_2 +   0.000000553859294\,u_1u_2^2 \]
\[ -0.000000246021037 \,u_2^3 + 0.000000011479566 \,u_1^4+  0.000000004666064\,u_1^3u_2\]
\[  -0.000000001994621 \,u_1^2u_2^2+   0.000000000003563\,u_2^4\]
and one may see in Figure \ref{figure-2} 
that the compact connected component of the sublevel set $\{\x: g(\x)\leq 0\}$ practically coincides with $\G$!
\begin{figure}[h!]
\begin{center}
\includegraphics[width=0.6\textwidth]{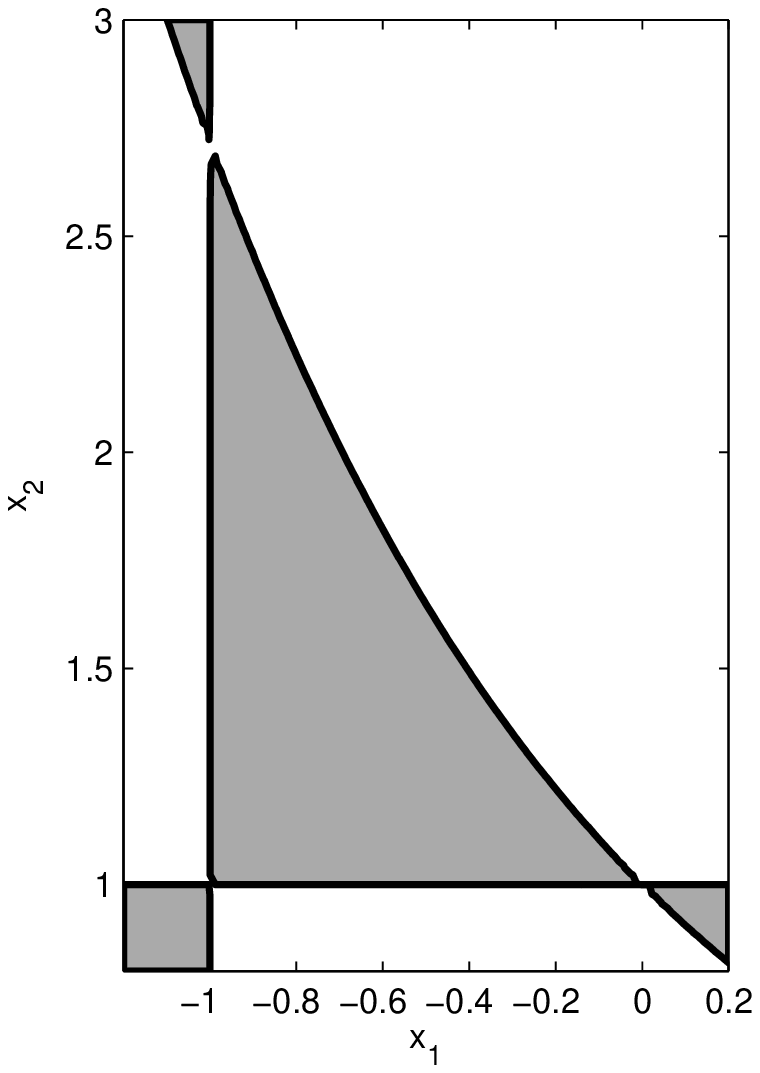}
\caption{Shape $\G'=\{\x: g(\x)\leq0\}$ with $d=4$\label{figure-2}}
\end{center}
\end{figure}
\begin{figure}[h!]
\begin{center}
\includegraphics[width=0.6\textwidth]{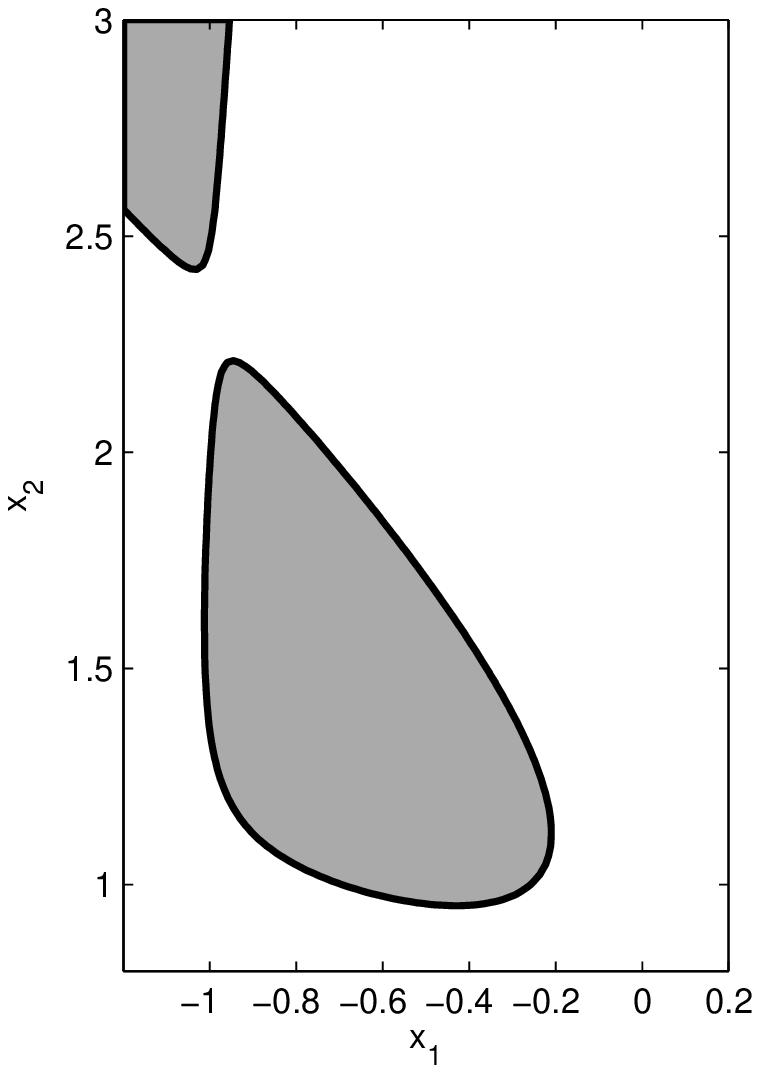}
\caption{Shape $\G'=\{\x: g(\x)\leq0\}$ with $d=3$\label{figure-3}}
\end{center}
\end{figure}
With $d=3$ one obtains the sublevel set displayed in Figure \ref{figure-3} whose compact connected component 
still gives another  (but rough) approximation of $\G$.
 \end{example}

\section{Finite determinateness}

An intriguing conclusion emerges from Theorem \ref{thmain}: the moments of a bounded semi-algebraic set defined by a single polynomial inequality are finitely determined. Specifically, the moments of low degree determine the rest of moments. Except the 1D case $(n=1)$ and a few well studied classes of domains (polyhedra, quadrature domains in 2D, sublevels of homogeneous polynomials)
or weights (Gaussian) the constructive aspects of this determinateness remain unknown.

\subsection{Bounded support} When speaking about finite determinateness, 
Theorem \ref{thmain} has a relevant implication to probability theory. Henceforth we restrict our attention to the particular case when
the underlying set $\G$ is described by a single polynomial inequality.

\begin{thm}
\label{th-proba} Let $g$ be a polynomial of degree $d$, so that the set
$$ \G =\{ \x \in \R^n:\: g(\x) <1\}$$ is bounded and $g(0) \neq 1.$

Then 
the infinite sequence of moments 
$\y=(y_\alpha)$, $\alpha\in\N^n$, of the Lebesgue measure restricted to $\G$, is determined
by its initial section $(y_\alpha)$, $\alpha\in\N^n_{3d}$.

Similarly, all moments 
$\y=(y_\alpha)$, $\alpha\in\N^n$, of the measure $d\mu=\exp(p(\x))d\x$ on $\G$ with $p\in\R[\x]_t$,
are determined
by the finite subset $(y_\alpha)$, $\alpha\in\N^n_{3d+t}$.
\end{thm}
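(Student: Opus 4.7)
The plan is to reduce to Theorem \ref{thmain}. Given the moments $(y_\alpha)_{|\alpha|\le 3d}$, that theorem yields a polynomial whose zero locus contains $\partial\G$, which determines $\partial\G$ as an algebraic variety, which in turn pins down the bounded open set $\G$ that it encloses; all higher-order moments then follow by direct integration over the recovered set.

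First I would verify the hypotheses of Theorem \ref{thmain} for $\G=\{\x:g(\x)<1\}$. The set is bounded by assumption, and we may replace it by ${\rm int}\,\overline{\G}$ without altering its Lebesgue moments, so we assume $\G={\rm int}\,\overline{\G}$. The boundary $\partial\G$ lies inside the real algebraic set $\{g=1\}$; let $h$ be the generator of the principal ideal $I(\partial\G)$ and put $\delta=\deg h\le d$. Since $1-g$ vanishes on $\partial\G$, $h$ divides $1-g$, and since $g(0)\ne 1$ we have $h(0)\ne 0$, so the origin lies off the Zariski closure of $\partial\G$. Theorem \ref{thmain}, applied with degree parameter $\delta$ (which can be identified as the smallest positive integer for which the linear system $\M^{\delta}_{2\delta}(\y)[-1,\g]^T=0$ admits a unique solution -- a test using only moments up to order $3\delta\le 3d$), then produces a unique polynomial $\tilde{g}$ of degree at most $\delta$ satisfying $\tilde{g}(0)=0$ and $\tilde{g}=1$ on $\partial\G$. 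In particular $\tilde{g}-1$ is a nonzero scalar multiple of $h$, which determines $I(\partial\G)$ and hence $\partial\G$ itself.

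Knowing $\partial\G$, I would next argue that $\G$ is fully determined. The complement $\R^n\setminus\partial\G$ decomposes into finitely many open connected components, and $\G$, being open and bounded with boundary $\partial\G$, is the union of a subset of the bounded components. Among the finitely many candidate unions the correct one is pinned down by the available initial moment values (the total volume $y_0$ together with a few low-order moments typically suffices; in generic situations $\R^n\setminus\partial\G$ has a single bounded component and the identification is unambiguous). Once $\G$ is known as a set, $y_\alpha=\int_\G \x^\alpha\,d\x$ is determined for every $\alpha\in\N^n$, proving the first claim.

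For the measure $d\mu=\exp(p(\x))\,d\x$ the argument is parallel, using the extended matrix $\widehat{\M}^d_{2d}(\y)$ in place of $\M^d_{2d}(\y)$; since its entries involve moments up to order $3d+t$, this matches the bound stated in the theorem, and the same recovery determines $\G$ and hence the full moment sequence. The step I expect to be most delicate is the passage from $\partial\G$ to $\G$ when $\R^n\setminus\partial\G$ has several bounded components; a conceptually cleaner route would be to invert the Stokes-type recurrence (\ref{lem1-1}) -- possibly enriched by the vector-field variants leading to (\ref{aux6}) -- to propagate moments directly from the initial segment, bypassing any combinatorial choice.
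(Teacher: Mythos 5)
Your reduction to Theorem \ref{thmain} is the right starting point, but the route you take diverges from what actually closes the argument, and the divergence leaves a genuine gap: the passage from the recovered boundary to the set $\G$ itself. After identifying $\tilde g-1$ as a scalar multiple of the generator $h$ of $I(\partial\G)$ you only know the variety $Z(h)$, and you must then decide which union of bounded components of $\R^n\setminus Z(h)$ equals $\G$; your justification (``a few low-order moments typically suffice'', ``in generic situations'') is not a proof, and the theorem carries no genericity hypothesis --- nothing you write excludes two different unions of components sharing all moments of order at most $3d$. The paper's proof avoids this step entirely by exploiting the special structure of $\G=\{\x: g(\x)<1\}$: the normalized polynomial $g^*=(g-g(0))/(1-g(0))$ satisfies $g^*(0)=0$ and $g^*=1$ on $\partial\G$, so by Lemma 2.1 its coefficient vector solves (\ref{thmain-1}); by the uniqueness part of Theorem \ref{thmain} the recovered polynomial $\tilde g$ \emph{is} $g^*$, hence $\G$ itself is the sublevel set $\{\x:\tilde g(\x)<1\}$ of a polynomial whose coefficients are functions of the moments up to order $3d$, and every $y_\beta$ is then the integral of $\x^\beta$ over this explicitly known set. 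No component selection is needed. (For the exponential density, with $p$ known, one uses $\widehat{\M}^d_{2d}(\y)$ exactly as you indicate.)

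Two further points. Your device for locating the boundary degree $\delta$ --- the smallest $k$ for which $\M^k_{2k}(\y)\,(-1,\g)^T=0$ is uniquely solvable --- is itself unjustified: the implication ``solution $\Rightarrow$ the polynomial equals $1$ on $\partial\G$'' in the proof of Theorem \ref{thmain} rests on the squaring argument, which needs test exponents up to order $\deg h+k$, and this exceeds $2k$ when $k<\deg h$; a spurious unique solution at a lower degree is therefore not excluded, and your test could return a polynomial unrelated to the boundary. The concern that motivated the detour is legitimate (if $\deg h<d$ the hypothesis ``$\partial\G$ has degree $d$'' in Theorem \ref{thmain} fails, a point the paper's own proof passes over in silence), but descending to degree $\delta$ is precisely what forfeits the identity $\G=\{\tilde g<1\}$ that makes the conclusion immediate; at degree $\delta$ the recovered polynomial only encodes $h$, whose sign set need not be $\G$. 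There is also a sign caveat ($g(0)<1$ versus $g(0)>1$ flips the inequality defining $\G$ in terms of $g^*$) which neither you nor the paper addresses, but as written your proof is incomplete at the boundary-to-domain step.
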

\begin{proof}
Let $\y^*=(y_\alpha)$, $\alpha\in\N^n_{3d}$, be the vector of moments of the Lebesgue measure on $\G$,
and let $1-g\in\R[\x]_d$ be the polynomial in Theorem \ref{th-proba} with vector of coefficients 
$(1,-\g)\in\R^{s(d)}$. Then by Theorem \ref{thmain}, $\g$ solves (\ref{aux1})
which implies that each entry of $\g$ is a function
of $\y^*$ and so we may and will write $\g=(g_\alpha(\y^*))$, $\alpha\in\N^n_d$. But then for every $\beta\in\N^n$,
\[y_\beta\,=\,\int_{\G}\x^\beta\,d\x\,=\,\int_{\{\x:\displaystyle\sum_{\alpha\in\N^n_d} g_\alpha(\y^*)\x^\alpha\leq 1\}}\x^\beta\,d\x
\,=:\,f_\beta(\y^*),\]
is a function of $\y^*$. Same arguments apply for the second statement 
with $d\mu=\exp(p(\x))d\x$.
\end{proof}
In other words, let $\G$ be as in Theorem \ref{th-proba} and suppose that one knows the vector $\y^*$
of moments up to order $3d$ for the Lebesque measure on $\G$. Then one can 
construct the polynomial $g\in\R[\x]_d$ in Theorem \ref{thmain}. All other moments $y_\beta$, $\vert\beta\vert>3d$,
are obtained by integrating $\x^\beta$ on $\G$ which is clearly a function of $\y^*$.\\

\begin{remark} The above determinateness phenomenon is similar to the Gaussian case where $\G=\{\x: g(\x)\leq 1\}$ and $g=\x^T\Sigma\x$
for some positive definite matrix $\Sigma\succ0$. Indeed, 
\[\int_{\G}\x^\beta\,d\x\,=\,\theta_\beta\int_{\R^n}\exp(-\x^T\Sigma\,\x)\,d\x,\]
for some constant $\theta_\beta$ that depends only on the dimension $n$ and $\vert\beta\vert$.
But then
\begin{eqnarray*}
\int_{\G}\x^\beta\,d\x&=&\theta_\beta\int_{\R^n}\x^\beta\,\exp(-\x^T\Sigma\,\x)\,d\x,\qquad\beta\in\N^n,\\
&=&\theta_\beta\int_{\R^n}\x^\beta\,\exp(-\x^T\Delta(\y^*)^{-1}\,\x)\,d\x,\qquad\beta\in\N^n,\\
&=&f_\beta(\y^*),\qquad\beta\in\N^n,
\end{eqnarray*}
where 
\[\Delta(\y^*)\,:=\,\int_{\R^n} \x\x^T\exp(-\x^T\Sigma\x)d\x,\]
i.e., up to scaling, $\Delta(\y^*)$ is the covariance matrix $\Sigma^{-1}$ (i.e. matrix of moments or order $2$).
\end{remark}

\subsection{Exponential weights as densities}
Recall that a function $f:\R^n\to\R$ is said to be quasi-homogeneous if there exists 
$\u\in\Q^n$ such that $f(\lambda^{u_1}x_1,\ldots,\lambda^{u_n}x_n)=\lambda f(\x)$ for all $\x\in\R^n$, and all $\lambda>0$;
then $f$ is also said to be $\u$-quasi-homogeneous or quasi-homogeneous of type $\u$.

Consider the convex cone
\[C\,:=\,\{\,g\in\,\R[\x]_d:\:\int_{\R^n}\exp(-g(\x))\,d\x\,<\,\infty\,\},\]
and let $\mu_g$ be the Borel measure on $\R^n$
with density $\x\mapsto \exp(-g(\x))d\x$ for some
polynomial $g\in C$. As usual write
\[\x\mapsto g(\x)\,=\,\sum_{\alpha\in\N^n_d}g_\alpha\,\x^\alpha\,=\,
\sum_{k=0}^dg_k(\x),\]
where each $g_k\in\R[\x]_k$ is homogeneous of degree $k$.
Let $(g_0,\g)=(g_\alpha)$, $\alpha\in\N^n_d$, 
be the vector of coefficients of $g\in\R[\x]_d$, and let $y_\alpha(\cdot):C\to\R$ be the function
\begin{equation} 
\label{new-y}
\g\mapsto y_\alpha(\g)\,:=\,\int_{\R^n} \x^\alpha \,d\mu_g\,=\,
\int_{\R^n} \x^\alpha \exp(-g(\x))\,d\x\,<\,\infty,\quad\alpha\in\N^n.\end{equation}
\begin{lemma}
\label{lemma-q}
For every $\alpha\in\N^n$, fixed, the function $\g\,\mapsto\,y_\alpha(\g)$
is $\u$-quasi homogeneous where $\u\in\Q^{s(d)}$ and $u_\beta=-\vert\beta\vert/(n+\vert\alpha\vert)$ for all $\beta\in\N^n_d$.
In addition:
\begin{equation}
\label{partial-deriv}
\frac{\partial y_\alpha(\g)}{\partial g_\beta}\,=\,-\int_{\R^n}\x^{\alpha+\beta}\,d\mu_g.
\end{equation}
\end{lemma}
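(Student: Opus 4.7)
The plan is to prove the two assertions separately, the first by a dilation change of variables in the integral defining $y_\alpha$ and the second by differentiation under the integral sign.

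For the quasi-homogeneity claim, I would compute $y_\alpha$ at the rescaled coefficient vector $(\lambda^{u_\beta} g_\beta)_{\beta\in\N^n_d}$ and show it equals $\lambda\, y_\alpha(\g)$. Starting from
\[
y_\alpha\bigl((\lambda^{u_\beta} g_\beta)_\beta\bigr)\,=\,\int_{\R^n}\x^\alpha\exp\!\Bigl(-\sum_{\beta\in\N^n_d}\lambda^{u_\beta}g_\beta\x^\beta\Bigr)d\x,
\]
I would substitute $\x=\lambda^{c}\y$ with $c:=1/(n+|\alpha|)$, so that $d\x=\lambda^{nc}d\y$ and $\x^\gamma=\lambda^{c|\gamma|}\y^\gamma$. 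The exponent becomes $-\sum_\beta \lambda^{u_\beta+c|\beta|} g_\beta \y^\beta$, and the prefactor gives an overall power $\lambda^{c(n+|\alpha|)}=\lambda$. Choosing $u_\beta:=-c|\beta|=-|\beta|/(n+|\alpha|)$ kills the $\lambda$-dependence in the exponent, yielding exactly $\lambda\, y_\alpha(\g)$. The same dilation, read backwards, also shows that if $\g\in C$ then the rescaled coefficient vector still belongs to $C$, so both sides are well defined.

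For the derivative formula I would simply differentiate under the integral sign: the integrand $\x^\alpha\exp(-g(\x))$ depends on $g_\beta$ only through the factor $\exp(-g_\beta\x^\beta)$ inside $\exp(-g(\x))$, whose partial derivative with respect to $g_\beta$ is $-\x^\beta\exp(-g(\x))$. Hence formally
\[
\frac{\partial y_\alpha(\g)}{\partial g_\beta}\,=\,-\int_{\R^n}\x^{\alpha+\beta}\exp(-g(\x))\,d\x\,=\,-\int_{\R^n}\x^{\alpha+\beta}\,d\mu_g.
\]
To justify the interchange of differentiation and integration, I would fix a small neighborhood of $\g$ inside the open convex cone $C$ and produce an integrable upper bound for $|\x^{\alpha+\beta}|\exp(-g(\x))$ uniform over that neighborhood, using that a sufficiently small perturbation of $g$ still lies in $C$ and hence has finite polynomial moments against its exponential density.

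The main obstacle is the integrability/domination step in the second assertion: one must verify that $\g\in C$ forces all polynomial moments of $d\mu_g$ to be finite and, more importantly, that this property is stable under small perturbations of the coefficients, which is what allows the application of the dominated convergence theorem to swap $\partial/\partial g_\beta$ with the integral. The quasi-homogeneity calculation itself is a one-line change of variables and presents no real difficulty.
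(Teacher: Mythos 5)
Your proof is correct and follows essentially the same route as the paper: the quasi-homogeneity is obtained by the same dilation change of variables $\x=\lambda^{1/(n+|\alpha|)}\y$ after observing that $\lambda^{u_\beta}g_\beta\x^\beta=g_\beta(\lambda^{-1/(n+|\alpha|)}\x)^\beta$, and the derivative identity (\ref{partial-deriv}) is obtained by differentiation under the integral sign, justified by the exponential weight. Your extra remark about producing a dominating bound uniform on a neighborhood of $\g$ in $C$ just makes explicit the justification the paper states in one line.
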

\begin{proof}
Let $\u=(u_\beta)$, $\beta\in\N^n_d$, with $u_\beta�=-\vert\beta\vert/(n+\vert\alpha\vert)$.
Then with $\lambda\in\R$
\begin{eqnarray*}
y_\alpha((\lambda^{u_\beta} g_\beta))&=&\int_{\R^n}\x^\alpha\exp(-\displaystyle\sum_{\beta}g_\beta\lambda^{u_\beta}\x^\beta)\,d\x\\
&=&\int_{\R^n}\x^\alpha\exp(-\displaystyle\sum_{\beta}g_\beta(\lambda^{-1/(n+\vert\alpha\vert)}\x)^\beta)\,d\x\\
&=&\lambda\int_{\R^n}\x^\alpha\exp(-g(\x))\,d\x\,=\,\lambda\,y_\alpha(\g).
\end{eqnarray*}
Finally, (\ref{partial-deriv}) follows from derivation under the integral sign which is justified because of
the exponential weight.
\end{proof}
We are now able to state the main result of this section.
\begin{thm}
\label{th-Rn}
Let $\mu_g$ be as in (\ref{new-y}) with $g\in C$.  Then for each $\alpha\in\N^n$,
\begin{equation}
\label{th-Rn-1}
(n+\vert\alpha\vert)\,\int_{\R^n}\x^\alpha\,d\mu \,=\,
\sum_{k=1}^d k\int_{\R^n}\x^\alpha g_k(\x)\,d\mu,
\end{equation}
or, equivalently:
\begin{equation}
\label{th-Rn-2}
(n+\vert\alpha\vert)\,y_\alpha(\g)\,=\,\sum_{k=1}^d k\sum_{\vert\beta\vert=k}g_\beta\, y_{\alpha+\beta}(\g).
\end{equation}
\end{thm}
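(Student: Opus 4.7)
The plan is to reduce Theorem~\ref{th-Rn} to the Euler-type identity for quasi-homogeneous functions, exploiting precisely the two conclusions of Lemma~\ref{lemma-q} that were just established. First I would dispose of the equivalence between (\ref{th-Rn-1}) and (\ref{th-Rn-2}): since $g_k(\x)=\sum_{|\beta|=k}g_\beta\,\x^\beta$ and integration commutes with finite sums, we have
\[
\int_{\R^n}\x^\alpha g_k(\x)\,d\mu_g\,=\,\sum_{|\beta|=k} g_\beta\,y_{\alpha+\beta}(\g),
\]
so the two identities are literally the same once one groups the $\beta$'s by total degree $k=|\beta|$.

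The main step is then (\ref{th-Rn-2}), and I would deduce it from Lemma~\ref{lemma-q} by applying Euler's formula for quasi-homogeneous functions. Fix $\alpha\in\N^n$; the lemma tells us that the map $\g\mapsto y_\alpha(\g)$ is quasi-homogeneous of type $\u\in\Q^{s(d)}$ with $u_\beta=-|\beta|/(n+|\alpha|)$ and of degree $1$. Differentiating the identity $y_\alpha((\lambda^{u_\beta}g_\beta)_\beta)=\lambda\,y_\alpha(\g)$ with respect to $\lambda$ and setting $\lambda=1$ yields
\[
\sum_{\beta\in\N^n_d} u_\beta\,g_\beta\,\frac{\partial y_\alpha(\g)}{\partial g_\beta}\,=\,y_\alpha(\g).
\]
Substituting the values $u_\beta=-|\beta|/(n+|\alpha|)$ together with the formula $\partial y_\alpha/\partial g_\beta=-y_{\alpha+\beta}(\g)$ from (\ref{partial-deriv}) turns this into
\[
\sum_{\beta\in\N^n_d}\frac{|\beta|}{n+|\alpha|}\,g_\beta\,y_{\alpha+\beta}(\g)\,=\,y_\alpha(\g),
\]
and multiplying through by $n+|\alpha|$ and grouping the terms by $k=|\beta|$ gives exactly (\ref{th-Rn-2}).

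As a sanity check, I would note the same identity can be obtained directly by a divergence-theorem computation on $\R^n$: apply (\ref{eq-stokes}) on a large ball $B_R$ with the vector field $X(\x)=\x$ and the function $f(\x)=\x^\alpha\exp(-g(\x))$, use $\mathrm{Div}(X)=n$ and the Euler relation $\langle\x,\nabla g(\x)\rangle=\sum_{k=1}^dk\,g_k(\x)$, and finally let $R\to\infty$. The one genuinely delicate point here is justifying that the boundary term on $\partial B_R$ tends to zero and that the differentiation under the integral in (\ref{partial-deriv}) is licit; both reduce to the superpolynomial decay of $\exp(-g)$ guaranteed by $g\in C$, so the main obstacle is really the same either way, namely ensuring the integrability estimates that make the super-level growth of $|\x|^{|\alpha|+|\beta|}$ negligible against $\exp(-g(\x))$. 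Given that this integrability is built into the definition of $C$, the quasi-homogeneity derivation is the cleaner of the two and would be my presented proof.
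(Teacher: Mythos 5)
Your proposal is correct and follows essentially the same route as the paper: the paper also derives (\ref{th-Rn-2}) by applying Euler's identity for quasi-homogeneous functions to $\g\mapsto y_\alpha(\g)$, substituting $u_\beta=-\vert\beta\vert/(n+\vert\alpha\vert)$ and the derivative formula (\ref{partial-deriv}), then regrouping terms by $\vert\beta\vert=k$. Your remark that the identity could alternatively be obtained by a Stokes/divergence argument on large balls is a fine sanity check but is not part of the paper's proof, and your presented argument matches the paper's.
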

\begin{proof}
As $g\mapsto y_\alpha(\g)$ is $\u$-quasi homogeneous, Euler's identity for quasi homogeneous functions yields
\begin{eqnarray*}
y_\alpha(\g)&=&\sum_{\beta\in\N^n_d}u_\beta\, g_\beta\,\frac{\partial y_\alpha(\g)}{\partial g_\beta}\\
&=&\sum_{\beta\in\N^n_d}\frac{\vert\beta\vert}{n+\vert\alpha\vert}g_\beta\,\int_{\R^n}\x^{\alpha+\beta}\,\exp(-g(\x))\,d\x\\
&=&\sum_{k=0}^d\frac{k}{n+\vert\alpha\vert}\,\int_{\R^n}\x^\alpha\,g_k(\x)\,\exp(-g(\x))\,d\x
\end{eqnarray*}
which is the desired result.
\end{proof}
As a corollary we obtain the reconstruction of $g\in\R[\x]_d$ from knowledge of finitely
moments $\y^d=(y_\alpha)$, $\alpha\in\N^n_d$, of $\mu_g$. 
Let $\M_d(\y)$ be the usual $s(d)\times s(d)$ moment matrix of order $d$ associated with $\mu_g$, i.e.,
\[\M_d(\y)(\alpha,\beta)\,=\,y_{\alpha+\beta},\quad\alpha,\beta\,\in\N^n_d.\]
Notice that $\M_d(\y)$ is non singular as $\mu_g$ has a positive density. Next, 
let $\M^d(\y)$ be the $s(d)\times s(d)$ matrix with rows and columns indexed by
$\alpha,\beta\in\N^n_d$ and with entries
\begin{equation}
\label{matrixMd}
\M^d(\y)(\alpha,\beta)\,=\,\left\{
\begin{array}{rl}y_{\alpha+\beta},&\beta=0\\
\frac{\vert\beta\vert\,y_{\alpha+\beta}}{n+\vert\alpha\vert},&0\neq\beta\in\N^n_d\end{array}
\right.,\quad \alpha\in\N^n_d.\end{equation}
Then
(\ref{th-Rn-2}) for all $\alpha\in\N^n_d$ reads
\[\M^d(\y)\,\left[\begin{array}{c}-1\\ \g\end{array}\right]\,=\,0,\]
or, equivalently, using the moment matrix $\M_d(\y)$,
\[\Delta\,\M_d(\y)\,\DD\,\left[\begin{array}{c}-1\\ \g\end{array}\right]\,=\,\Delta_0\,\y^d\]
where $\Delta,\DD$ and $\Delta_0$ are diagonal matrices defined by:
\[\Delta(\alpha,\alpha)\,=\,1/(n+\vert\alpha\vert);\quad
\DD(\alpha,\alpha)\,=\,\left\{\begin{array}{rl}1,&\alpha=0\\ \vert\alpha\vert,&0\neq\alpha\end{array}\right.
\quad\alpha\in\N^n,\]
\[\Delta_0(\alpha,\alpha)\,=\,1-1/(n+\vert\alpha\vert),\quad\alpha\in\N^n_d.\]
\begin{corollary}
\label{coro-exp}
Let $\mu_g$ be the Borel measure in (\ref{new-y}) where $g\in\R[\x]_d$ has coefficient vector
$(g_0,\g)\in\R^{s(d)}$.
Then $\g$ is the unique solution of the linear system
\[\M^d(\y)\,\left[\begin{array}{c}-1\\ \v\end{array}\right]\,=\,0,\]
with $\M^d(\y)$ as in (\ref{matrixMd}) and 
\[g_0=\ln\left(\displaystyle\int_{\R^n}\exp(-\tilde{g}(\x)d\x\right)-\ln y_0,\]
where $\tilde{g}\in\R[\x]_d$
has coefficient vector $(0,\g)$.
\end{corollary}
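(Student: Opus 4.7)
The plan is to split the corollary into three parts: existence of $\g$ as a solution of the linear system, uniqueness of that solution, and a separate identification of the constant term $g_0$.

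Existence is essentially a rewriting of Theorem~\ref{th-Rn}. For each $\alpha\in\N^n_d$, dividing both sides of (\ref{th-Rn-2}) by $n+|\alpha|$ and moving the left-hand side across, I would recognise that the resulting identity
\[
-y_\alpha + \sum_{\beta\in\N^n_d,\,\beta\neq 0}\frac{|\beta|}{n+|\alpha|}\,y_{\alpha+\beta}\,g_\beta \,=\,0
\]
is precisely row $\alpha$ of $\M^d(\y)(-1,\g)^T=0$. Note that the sum in (\ref{th-Rn-2}) starts at $k=1$, so $g_0$ genuinely does not enter, consistently with the fact that the leftmost entry $-1$ is hard-coded.

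For uniqueness I would exploit the factorisation $\Delta\,\M_d(\y)\,\DD$ displayed in the excerpt just above the corollary: the equation $\M^d(\y)(-1,\v)^T=0$ is equivalent to $\Delta\,\M_d(\y)\,\DD\,(-1,\v)^T=\Delta_0\,\y^d$. Subtracting two putative solutions $\v_1,\v_2$ kills the right-hand side and yields $\M_d(\y)\,\DD\,(0,\v_1-\v_2)^T=0$, since $\Delta$ has strictly positive diagonal. Because $\mu_g$ has a strictly positive smooth density on $\R^n$, its ordinary Hankel moment matrix $\M_d(\y)$ is positive definite, hence invertible; combined with the invertibility of $\DD$ (whose diagonal entries $|\beta|$ for $\beta\neq 0$ and $1$ for $\beta=0$ are all positive), this forces $\v_1=\v_2$.

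Finally, $g_0$ cannot be recovered from the linear system because (\ref{th-Rn-2}) is unchanged if $g$ is replaced by $g+c$ (which only rescales $\mu_g$ by $e^{-c}$), so it must be pinned down separately. Writing $g=g_0+\tilde g$ with $\tilde g$ having zero constant term, the total mass decouples as
\[
y_0 \,=\, \int_{\R^n} e^{-g(\x)}\,d\x \,=\, e^{-g_0}\int_{\R^n}e^{-\tilde g(\x)}\,d\x,
\]
and taking logarithms gives the claimed formula. Integrability of $e^{-\tilde g}$ follows from $g\in C$ since a constant shift of the exponent is harmless. I anticipate no serious obstacle here: every ingredient (Theorem~\ref{th-Rn}, the explicit factorisation, strict positivity of the density) is either already recorded in the excerpt or immediate, and the corollary is obtained by assembling them.
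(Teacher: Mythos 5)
Your proposal is correct and follows essentially the same route as the paper: existence by reading (\ref{th-Rn-2}) for all $\alpha\in\N^n_d$ as the rows of $\M^d(\y)(-1,\g)^T=0$, uniqueness via the factorisation $\Delta\,\M_d(\y)\,\DD$ and the invertibility of $\M_d(\y)$ (positive density), and $g_0$ from the total-mass identity $y_0=e^{-g_0}\int e^{-\tilde g}\,d\x$. The only cosmetic difference is that the paper concludes uniqueness by inverting $\Theta(\y)=\Delta\,\M_d(\y)\,\DD$ directly, whereas you subtract two solutions; these are the same argument.
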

\begin{proof}
The above linear system  has always the solution $\v=\g$ because
$\M^d(\y)(-1,\g)^T=0$  is just a rephrasing of (\ref{th-Rn-2}) with all $\alpha\in\N^n_d$. 
But this is equivalent to
\[\underbrace{\Delta\,\M_d(\y)\,\DD}_{\Theta(\y)}\,\left[\begin{array}{c}-1\\ \v\end{array}\right]\,=\,\Delta_0\,\y^d.\]
As already noticed, the matrix $\Theta(\y)$ is invertible because 
$\M_d(\y)$ is the moment matrix of $d\mu_g$ which has a 
positive density. And so
\[(-1,\v)^T\,=\,(-1,\g)^T\,=\,\Theta(\y)^{-1}\Delta_0\,\y^d.\]
To obtain the constant coefficient $g_0$, observe that
\[y_0\,=\,\int_{\R^n}\exp(-g(\x))\,d\x\,=\,\exp(-g_0)\int_{\R^n}\exp(-\tilde{g}(\x))\,d\x,\]
which yields the final statement.
\end{proof}
So again and as in the bounded case, one may recover the polynomial $g\in\R[\x]_d$
this time from the knowledge of finitely many moments $\y=(y_\alpha)$, $\alpha\in\N^n_{2d}$
(up to order $2d$). This also implies that all other moments $y_\alpha$ with $\vert\alpha\vert>2d$
are functions of those up to order $2d$.

Observe that the family of measures $\{\mu_g: g\in C\}$ with density $\exp(-g(\x))$ form a so-called {\it exponential family}
(well-studied notably in Probability and Statistics) for which it is known that moments up to order $d$ determine the other ones.
Estimation of a parameter $g\in C$ given i.i.d. observations $\{\v_d(\x(i))\}$, $i=1,\ldots,N$, can be done by solving a convex optimization
problem, e.g. via  maximum entropy methods. However, and to the best of our knowledge,
exact reconstruction results  from moments (even with redundant information) like in Theorem \ref{thmain}
and Corollary \ref{coro-exp} are new. Moreover, notice also that in Theorem \ref{thmain}
the family of measures $\{\mu_g:g\in\R[\x]_d\}$, on $\{\x:g(\x)<1\}$ with uniform or exponential density, do {\it not} form an exponential family.

\section{Examples}

We illustrate the results above with a few low degree and low dimensional examples.

\begin{example}
\label{example-1}
Let us consider the two-dimensional example of the annulus
\[\G\,:=\,\{\,\x\in\R^2:\:1-x_1^2-x_2^2\geq0;\:x_1^2+x_2^2-s\geq0\,\},\quad 0<s<1.\]
That is, $\G$ is the set of points between the two circles $\{\x:1-\Vert\x\Vert^2=0\}$ and 
$\{\x:s-\Vert\x\Vert^2=0\}$ displayed in Figure \ref{annulus}.
\begin{figure}[h!]
\begin{center}
\includegraphics[width=0.6\textwidth]{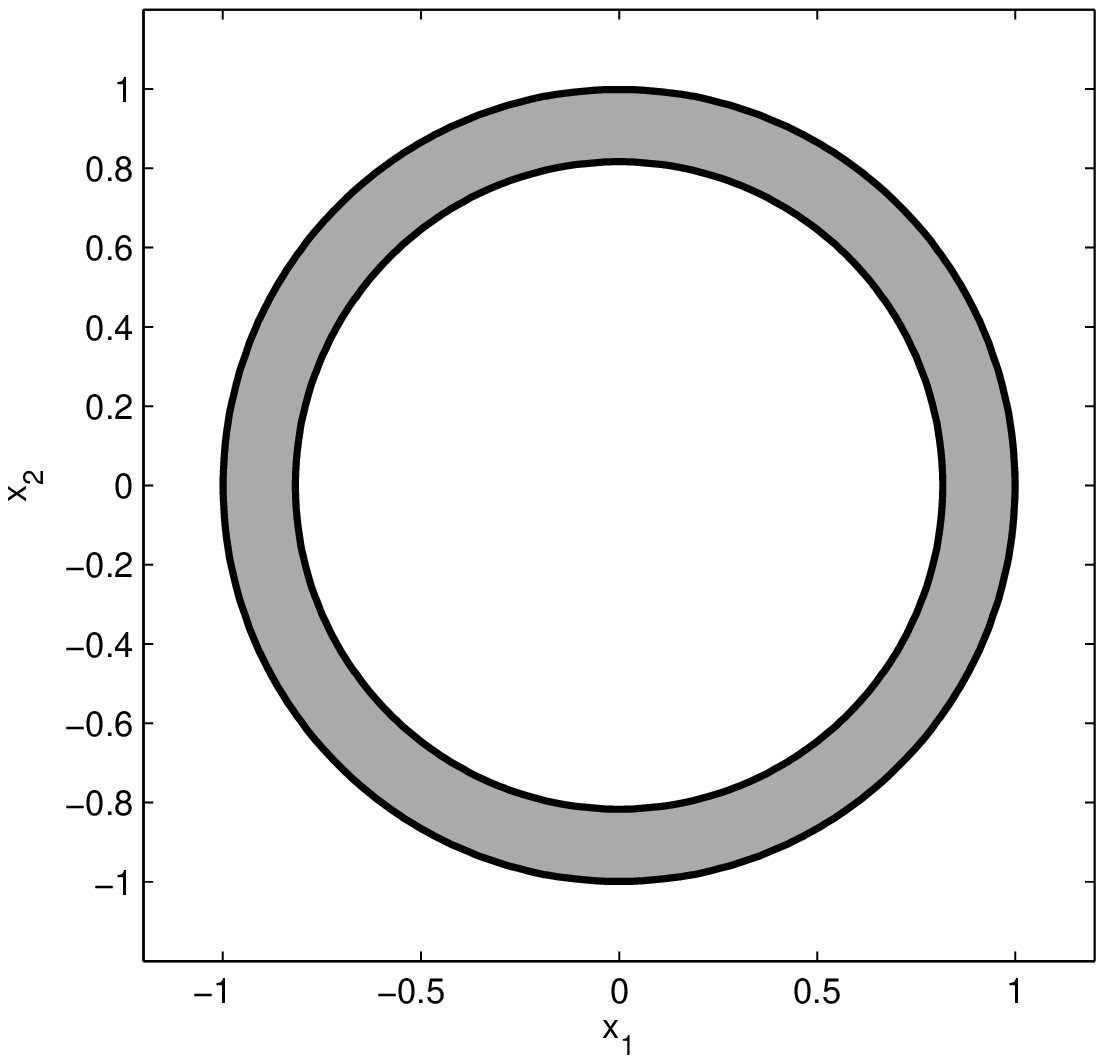}
\caption{The annulus $\G=\{\x: (1-x_1^2-x_2^2)(x_1^2+x_2^2-2/3)\geq0\}$\label{annulus}}
\end{center}
\end{figure}
With $s:=2/3$ and up to a constant, the moment matrix reads:
\[{\tiny\left[\begin{array}{ccccccccccccccc}
 1.0000     &    0   &      0   & 0.2500      &   0 &   0.2500 &        0    &     0    &     0     &    0& 0.125 & 0 &0.0417&0&0.1250\\
         0  &  0.2500  &       0   &      0     &    0       &  0   & 0.1250   &      0  &  0.0417    &     0&  0&0&0&0&0 \\
         0    &     0   & 0.2500   &      0   &      0     &    0   &      0 &   0.0417     &    0&    0.1250&0&0&0&0&0\\
    0.2500    &     0    &     0   & 0.1250   &      0   & 0.0417  &       0    &     0      &   0    &     0& 0.0781&0&0.0156&0&0.0156\\
         0   &      0   &      0   &      0  &  0.0417    &     0       &  0     &    0    &     0   &      0&0&0.0156&0&0.0156&0\\
    0.2500    &     0    &     0  &  0.0417     &    0 &   0.1250 &        0  &       0   &      0   &      0&0.0156&0&0.0156&0&0.0781\\
         0  &  0.1250    &     0     &    0     &    0     &    0 &   0.0781    &     0  &  0.0156 &        0&0&0&0&0&0\\
         0    &     0   & 0.0417  &       0   &      0 &        0   &      0   & 0.0156   &      0 &   0.0156&0&0&0&0&0\\
         0  &  0.0417    &     0 &        0     &    0     &    0 &   0.0156   &      0  &  0.0156   &      0&0&0&0&0&0\\
         0     &    0 &   0.1250  &       0    &     0    &     0   &      0 &   0.0156    &     0 &   0.0781&0&0&0&0&0\\
    0.1250   &      0   &      0 &   0.0781 &        0  &  0.0156 &        0  &       0     &    0     &    0&0.0547&0&0.0078&0&0.0047\\
         0    &     0  &       0     &    0  &  0.0156    &     0    &     0     &    0     &    0     &    0&0&0.0078&0&0.0047&0\\
    0.0417    &     0      &   0   & 0.0156 &        0  &  0.0156   &      0    &     0   &      0  &       0&0.0078&0&0.0047&0&0.0078\\
         0     &    0    &     0   &      0   & 0.0156  &       0     &    0   &      0     &    0  &       0&0&0.0047&0&0.0078&0\\
    0.1250    &     0    &     0  &  0.0156    &     0  &  0.0781    &     0     &    0    &     0     &    0&0.0047&0&0.0078&0&0.0547\end{array}\right]}\]
 and the matrix $15\times 15$ matrix $\M_d^d(\y)\,(=\M^4_4(\y))$ reads:  
    
 \[{\tiny\left[\begin{array}{ccccccccccccccc}
     0.3333    &     0      &   0  &  0.2778    &     0  &  0.2778   &      0       &  0   &      0   &      0 &   0.2639   &      0   & 0.0880&0&0.2639\\
         0  &  0.1852   &      0     &    0    &     0      &   0   & 0.1759  &       0  &  0.0586     &    0     &    0     &    0   &      0&0&0\\
         0   &      0 &   0.1852    &     0     &    0       &  0    &     0 &   0.0586   &      0  &  0.1759    &     0     &    0    &     0&0&0\\
    0.1389   &      0   &      0  &  0.1319   &      0  &  0.0440   &      0     &    0     &    0     &    0   & 0.1254    &     0   & 0.0251&0&0.0251\\
         0    &     0    &     0     &    0    &0.0440   &      0      &   0 &        0  &       0   &      0 &        0  &  0.0251   &      0&0.0251&0\\
    0.1389   &      0     &    0  &  0.0440  &       0 &   0.1319   &      0     &    0   &      0     &    0 &   0.0251   &      0 &   0.0251&0&0.1254\\
         0 &   0.1056    &     0     &    0     &    0    &     0&    0.1003    &     0  &  0.0201   &      0    &     0    &     0      &   0&0&0\\
         0  &       0  &  0.0352    &     0    &     0     &    0     &    0 &   0.0201     &    0  &  0.0201  &       0 &        0   &      0&0&0\\
         0   & 0.0352   &      0   &      0      &   0  &       0  &  0.0201    &     0  &  0.0201    &     0     &    0    &     0     &    0&0&0\\
         0     &    0    &0.1056     &    0   &      0    &     0    &     0   & 0.0201   &      0  &  0.1003    &     0    &     0     &    0&0&0\\
    0.0880     &    0     &    0 &   0.0836  &       0  &  0.0167     &    0    &     0    &     0     &    0   & 0.0791  &       0 &   0.0113&0&0.0068\\
         0    &     0      &   0     &    0   & 0.0167   &      0  &       0      &   0  &       0   &      0   &      0&    0.0113   &      0&0.0068&0\\
    0.0293   &      0     &    0   & 0.0167    &     0 &   0.0167  &       0   &      0     &    0   &      0 &   0.0113   &      0  &  0.0068&0&0.0113\\
         0   &      0    &     0    &     0   & 0.0167   &      0      &   0    &     0     &    0    &     0   &      0 &   0.0068      &   0&0.0113&0\\
    0.0880      &   0     &    0  &  0.0167   &      0   & 0.0836  &       0      &   0   &      0     &    0  &  0.0068     &    0   & 0.0113&0&0.0791\end{array}\right]}\]
From the vector of eigenvalues 
 \[{\small \begin{array}{cccccccc}
 (0.6562&   -0.0595  &  0.1624 &   0.2941 &   0.2941  &  0.0053  &  0  &  0.0628 \end{array}}\]
\[ {\small \begin{array}{cccccccc}
 -0.0021  &  0.0178   & 0.0178  & -0.0063 &  -0.0063 &-0.0007& 0.0045)\end{array}}\]
 of $\M_4^4(\y)$   one can  see that ${\rm rank}\,\M_4^4(\y)=14$. 
 The normalized eigenvector associated with the zero eigenvalue reads
\[{\small \begin{array}{ccccccccccccccc}
  (0.1925 &  0 &   0  & -0.4811  & 0 &  -0.4811  &  0 &  0  &  0 &  0 &   0.2887 &  0   & 0.5774& 0& 0.2887)\end{array}}\]
After scaling, in
\[{\small \begin{array}{ccccccccccccccc}
(0.6667  & 0  &  0   &-1.6667 &  0&   -1.6667 &   0&  0&    0&   0&    1&0& 2&0&1)\end{array}}\]
one recognizes the vector of coefficients of the polynomial
\[\x\mapsto g(\x)=(1-x_1^2-x_2^2)(s-x_1^2-x_2^2)\]
which vanishes on the boundary $\partial\G$.  Finally, one can also check that
the matrix 
\[\M_2^2(\y)\,=\,{\tiny\left[\begin{array}{cccccc}
     0.3333    &     0      &   0  &  0.2778    &     0  &  0.2778 \\
         0  &  0.1852   &      0     &    0    &     0      &   0 \\
         0   &      0 &   0.1852    &     0     &    0       &  0  \\
    0.1389   &      0   &      0  &  0.1319   &      0  &  0.0440 \\
         0    &     0    &     0     &    0    &0.0440   &      0  \\
    0.1389   &      0     &    0  &  0.0440  &       0 &   0.1319  \end{array}\right]}\]
    and the matrix $\M^3_3(\y):$
        \[{\tiny\left[\begin{array}{cccccccccc}
     0.3333    &     0      &   0  &  0.2778    &     0  &  0.2778   &      0       &  0   &      0   &      0\\
         0  &  0.1852   &      0     &    0    &     0      &   0   & 0.1759  &       0  &  0.0586     &    0\\
         0   &      0 &   0.1852    &     0     &    0       &  0    &     0 &   0.0586   &      0  &  0.1759\\
    0.1389   &      0   &      0  &  0.1319   &      0  &  0.0440   &      0     &    0     &    0     &    0 \\
         0    &     0    &     0     &    0    &0.0440   &      0      &   0 &        0  &       0   &      0\\
    0.1389   &      0     &    0  &  0.0440  &       0 &   0.1319   &      0     &    0   &      0     &    0\\
         0 &   0.1056    &     0     &    0     &    0    &     0&    0.1003    &     0  &  0.0201   &      0\\
         0  &       0  &  0.0352    &     0    &     0     &    0     &    0 &   0.0201     &    0  &  0.0201\\
         0   & 0.0352   &      0   &      0      &   0  &       0  &  0.0201    &     0  &  0.0201    &     0 \\
         0     &    0    &0.1056     &    0   &      0    &     0    &     0   & 0.0201   &      0  &  0.1003\end{array}\right]}\]
have respective full rank $6$ and $10$ so that (\ref{thmain-1}) has no solution when $d=2$ or $d=3$.

So this example illustrates Corollary \ref{coro-1} to show that sometimes we only  need consider moments up to order $2d=8$
and not $3d=12$.
\end{example}   
   \begin{example}
   \label{simplex}
  The following (convex) example illustrates that the assumption $0\in{\rm int}\,\G$ is important.
  Let $\G\subset\R^2$ be the two-dimensional simplex $\{\x: x_1+x_2\leq 1;\,\x\geq0\}$.
   The matrix $\M_1^1(\y)$ which reads:
   \[\M^1_1(\y)\,=\,\left[\begin{array}{ccc}
   1/2&   1/4&   1/4\\
   1/6 &1/9 &1/18\\
   1/6 &1/18 &1/9
      \end{array}\right]\]
   has rank $2$ with zero eigenvector $(-1, 1, 1)$. And indeed
   even though the polynomial  $\x\mapsto 1-(x_1+x_2)$ vanishes only on some part $\Omega\subset\partial\G$ of the boundary
   $\partial\G$, (\ref{thmain-1}) holds because $\langle \x,\vec{n}_\x\rangle$ vanishes on $\partial\G\setminus\Omega$.

   What is more surprising is that $\M_2^2(\y)$ has only rank $3$ with 
   three zero-eigenvalues $\lambda_1=\lambda_2=\lambda_3=0$. 
     One has multiplicity $1$ with eigenvector
   \[(1, -4.405781742297638, 0.823092738895580, 3.405781742297638,\]
   \[ 1.582689003402045, -1.823092738895576),\]
   whereas the other eigenvalue is double with associated eigenvector
   \[(1, 0.744634776919192, -6.597713889514154, -1.744634776919187, \]
   \[3.853079112594940, 5.597713889514163).\]
   One can check that the two associated polynomials vanish when $x_1+x_2=1$.
   
     Similarly $\M^3_3(\y)$ has only rank $7$ with three zero-eigenvalues whose associated
   eigenvectors are polynomials of degree 2 which vanish whenever $x_1+x_2=1$.

   Even more surprising is that $\M^4_4(\y)$ has rank $14$ with associated zero-eigenvector \\
$(-1,1,1,0,\ldots,0)$.

    \end{example}

\end{document}